\documentclass[a4paper,12pt]{article}
\usepackage{amsmath,amsthm,hyperref}
\author{Peter Vassilev\\
Institute of Biophysics and Biomedical Engineering\\
Bulgarian Academy of Sciences\\
Acad. G. Bonchev Str., Bl. 105, Sofia-1113, Bulgaria\\
e-mail:\texttt{peter.vassilev@gmail.com}
}
\title{A note on an inequality involving the sides and medians in a triangle}
\date{}

\newtheorem{theorem}{Theorem}
\newtheorem{lemma}{Lemma}
\newtheorem{corollary}{Corollary}
\newtheorem{remark}{Remark}
\newtheorem{openproblem}{Open Problem}

\begin{document}
\maketitle

\hfill To the memory of my grandmothers \\
\mbox{}\hfill Zorka Misanova (1918-2016) \\
\mbox{}\hfill Olga Vasileva (1925-2017)

\abstract{The main focus of the present paper is the following inequality 
\[\left( \sqrt{bc}-a\right) m_a+ \left(\sqrt{ac}-b\right)m_b+\left(\sqrt{ab}-c\right)m_c \geq 0,\] where $a,b,c$ are the sides of a non-degenerate triangle and $m_a,m_b,m_c,$ the respective medians; which was conjectured to be true but had not been proved. We provide a proof. We also show that analogous inequality is true when the medians are replaced by the altitudes or the internal angle bisectors. Finally, we conclude with an open problem regarding the Cevians which would satisfy such inequality.}\\
\textbf{Keywords:} triangle, sides, medians, inequality, Cevians\\
\textbf{2020 MSC:} 97H30, 26D99

\section{Introduction}\label{intro} In the 1980s the Bulgarian mathematician Georgi Boychev observed that the following inequality seems to hold for an arbitrary triangle with sides $a,b,c$ and medians $m_a,m_b,m_c$:
\begin{equation}\label{INEQ}
 \sqrt{bc} m_a+ \sqrt{ac} m_b+\sqrt{ab} m_c \geq a m_a+ b m_b+c m_c 
\end{equation}
He made numerous numerical experiments which confirmed the conjectured relation, however, he was unable to find a definitive proof\footnote{We challenge the interested readers to test their ideas on possible proof before reading further}. He had communicated his conjecture in personal conversations with various colleagues including Mladen Vassilev-Missana, who relayed this story to us. Perhaps because such ``Olympiad type problems'' are not of great interest to the professional mathematicians or due to the fact that it was not generally known to puzzle solvers, not many people undertook the task to find a proof.

Whatever the case may be, the problem, as far as we know, had remained unresolved to this day. 

Before setting on finding a proof we made an investigation regarding similar problems. We have found the following inequality proposed by Kee-Wai Lau~\cite{Crux}:
\begin{equation}\label{INEQS}
\left({bc}-a^2\right) m_a+ \left({ac}-b^2\right)m_b+\left({ab}-c^2\right)m_c \geq 0
\end{equation}
which looks similar to \eqref{INEQ}. Its proof relies on the following inequality:
\begin{equation}\label{INEQKEY}
\begin{cases}
2a m_a \leq c m_b+b m_c\\
2b m_b \leq a m_c+c m_a\\
2c m_c \leq a m_b+b m_a
\end{cases}
\end{equation}
A purely algebraic proof is given by Marcin E. Kuczma in \cite{Cruxsol}. The same inequality is given by Dorin Andrica in \cite{MathematicalReflections} where geometrical proof is  given. For other geometrical proofs see also \cite{cuttheknot}. 

As it turns out \eqref{INEQKEY} is also crucial for the proof of~\eqref{INEQ}. In order to establish the proof we require some additional observations and finally the application of arithmetic mean - geometric mean inequality\footnote{In fact we only need a very particular case of it -- namely that if the product of three positive numbers equals one, their sum is greater or equal to three.}.

\section{Preliminary results}

\begin{lemma}\label{isosceles} 
Inequality \eqref{INEQ} is true for any isosceles triangle.
\end{lemma}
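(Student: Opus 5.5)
Without loss of generality let the triangle have $b=c$ (the inequality \eqref{INEQ} is symmetric under simultaneous permutation of sides and medians). Then $m_b=m_c$ and $\sqrt{bc}=b$, so the terms $b m_b$ and $c m_c$ on the right coincide with $\sqrt{ac}\,m_b$ and $\sqrt{ab}\,m_c$ only up to the factor $\sqrt{a}$ versus $\sqrt{b}$. After substitution, \eqref{INEQ} becomes
\[
b m_a + 2\sqrt{ab}\, m_b \geq a m_a + 2 b m_b ,
\]
that is,
\[
(b-a)\, m_a \geq 2\sqrt{b}\,(\sqrt{b}-\sqrt{a})\, m_b .
\]
Writing $b-a=(\sqrt b-\sqrt a)(\sqrt b+\sqrt a)$ and factoring out $\sqrt b-\sqrt a$, the inequality reduces to
\[
(\sqrt b-\sqrt a)\bigl((\sqrt a+\sqrt b)\,m_a - 2\sqrt b\, m_b\bigr)\geq 0 .
\]
The case $a=b$ is trivial (equality). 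So the plan is to show that the second factor has the same sign as $\sqrt b-\sqrt a$, i.e.\ the same sign as $b-a$.

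For this I would use the explicit median formulas with $b=c$:
\[
m_a=\tfrac12\sqrt{4b^2-a^2},\qquad m_b=\tfrac12\sqrt{2a^2+b^2}.
\]
Then the required claim is that $(\sqrt a+\sqrt b)\sqrt{4b^2-a^2}-2\sqrt b\sqrt{2a^2+b^2}$ is $\ge0$ when $b>a$ and $\le0$ when $b<a$ (with $a<2b$ for non-degeneracy). Normalize $b=1$, put $a=t^2$ with $0<t<\sqrt2$, and compare squares. Since both sides are positive, it suffices to compare
\[
(1+t)^2(4-t^4) \quad\text{and}\quad 4(2t^4+1).
\]
Their difference is a polynomial $P(t)$ vanishing at $t=1$. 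I would factor out $(1-t)$ and check that the cofactor $Q(t)$ is positive on $(0,\sqrt2)$. Concretely,
\[
P(t)=(1+2t+t^2)(4-t^4)-8t^4-4=-t^6-2t^5-13t^4+8t+4t^2,
\]
since $4+8t+4t^2-t^4-2t^5-t^6-8t^4-4$ gives exactly this. So $P(t)=t\,(-t^5-2t^4-13t^3+4t+8)$, and the quintic at $t=1$ equals $-1-2-13+4+8=-4\neq0$. That contradicts the expectation that $P(1)=0$, so I must double-check the reduction. Rechecking: at $a=b$ the second factor is $2\sqrt b\,(m_a-m_b)=0$, which is correct, since with $b=1$, $m_a=\tfrac12\sqrt3=m_b$. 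Then $(1+1)^2\cdot3=12=4\cdot3$, so $P(1)=0$. Recomputing, the quintic gives $-1-2-13+4+8=-4$, so the polynomial must be wrong. The careful expansion is therefore the key step.

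The main obstacle is exactly this routine but error-prone algebra: expanding $(1+t)^2(4-t^4)-4(2t^4+1)$ correctly, extracting the factor $(1-t)$, and then proving the cofactor $Q(t)>0$ on $(0,\sqrt2)$ (or splitting into $t<1$ and $1<t<\sqrt2$ if needed). For $Q$ I would first try grouping its positive low-degree terms against its negative high-degree terms, using $t^2<2$ to bound the high powers.

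A fallback that avoids the polynomial entirely is to apply \eqref{INEQKEY} in the isosceles case. Its first line gives $2am_a\le 2bm_b$, i.e.\ $am_a\le bm_b$. Combining this with the monotonicity of $m_a/m_b$ in $a$ may give the sign of the second factor directly. Specifically, $m_a<m_b$ exactly when $b<a$, and the bound $\sqrt a+\sqrt b$ versus $2\sqrt b$ then has the matching sign. But note this is not sufficient by itself: when $b>a$ we have $m_a>m_b$ and $\sqrt a+\sqrt b<2\sqrt b$, so the two comparisons pull in opposite directions, and the quantitative estimate (e.g.\ $am_a\le bm_b$ combined with $\sqrt a+\sqrt b\ge 2\sqrt{a}$, then comparing $2\sqrt a\,m_a$ with $2\sqrt b\,m_b$) is needed.
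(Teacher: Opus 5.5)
Your reduction is sound. With $b=c$, \eqref{INEQ} is equivalent to $(\sqrt b-\sqrt a)\bigl((\sqrt a+\sqrt b)m_a-2\sqrt b\,m_b\bigr)\ge0$. After normalizing $b=1$, $a=t^2$ and squaring two positive quantities, it suffices to show that $P(t)=(1+t)^2(4-t^4)-4(2t^4+1)$ has the sign of $1-t$.

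\textbf{The gap.} The proposal stops exactly at the only nontrivial step, so as written the lemma is not proved. Your expansion has a slip: $-t^4-8t^4=-9t^4$, not $-13t^4$. You detect the resulting inconsistency $P(1)\ne0$, but then leave the factorization and the positivity of the cofactor as an open ``obstacle''.

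\textbf{The repair} is one line:
\[
P(t)=-t^6-2t^5-9t^4+4t^2+8t=t(1-t)\,(t^4+3t^3+12t^2+12t+8).
\]
The cofactor has only positive coefficients, so no case split at $t=1$ and no use of $t^2<2$ is needed.

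\textbf{Comparison with the paper.} With this line inserted, your argument is complete and tidier than the paper's. The paper treats $a=b<c$ and $a<b=c$ separately. In each case it first replaces the factor $1+\sqrt x$ by a crude bound ($\le2$, resp.\ $\ge1+x$) before squaring, arriving at $(1-x)(-x^2+3x+1)$ and $x(8-5x-2x^2-x^3)$. Your substitution $a=t^2b$ removes the square root exactly and handles both shapes of isosceles triangle with a single factorization.

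\textbf{Your fallback via \eqref{INEQKEY}} would fail as sketched. For $a<b$ you need a lower bound on $m_a$, but $am_a\le bm_b$ is an upper bound. The intermediate claim $\sqrt a\,m_a\ge\sqrt b\,m_b$ is also false for small $a/b$, since its left side tends to $0$. For $a>b$ the two comparisons do not ``match'' either: $\sqrt a+\sqrt b>2\sqrt b$ pushes the second factor up while $m_a<m_b$ pushes it down, and you need it $\le0$.

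A working version uses the second line of \eqref{INEQKEY}, which for $b=c$ reads $bm_a\ge(2b-a)m_b$. With $t=\sqrt{a/b}\le1$ it gives $(\sqrt a+\sqrt b)m_a-2\sqrt b\,m_b\ge\sqrt b\,m_b\,t(1-t)(2+t)\ge0$. For $t\ge1$, the first line $am_a\le bm_b$ gives $(\sqrt a+\sqrt b)m_a-2\sqrt b\,m_b\le-\sqrt b\,m_b\,(t-1)(2t+1)/t^2\le0$.
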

\begin{proof} First we observe that \eqref{INEQ} turns into identity for an equilateral triangle. 

Thus, without loss of generality we can have either: 1)$a=b<c$ or 2)$a<b=c.$

Let us consider case 1). Then \eqref{INEQ} becomes
\begin{equation}\label{INEQFIRSTISOS}
2\sqrt{ac} m_a+a m_c \geq 2 a m_a+c m_c 
\end{equation}
Dividing \eqref{INEQFIRSTISOS} by $c^2,$ and putting $x=\frac{a}{c}<1,$ it is equivalent to:
\begin{align*}
2 \sqrt{x} \sqrt{2+x^2}+x\sqrt{4x^2-1} - 2x \sqrt{2+x^2}- \sqrt{4x^2-1} \\
= 2\sqrt{x}(1-\sqrt{x})\sqrt{2+x^2}+(\sqrt{x}-1)(\sqrt{x}+1)\sqrt{4x^2-1} \\
=(1-\sqrt{x})(2\sqrt{2x+x^3}-(\sqrt{x}+1)\sqrt{4x^2-1}) \geq 0
\end{align*}
But we have,
\begin{align*}
2\sqrt{2x+x^3}-(\sqrt{x}+1)\sqrt{4x^2-1} \geq 2(\sqrt{2x+x^3}-\sqrt{4x^2-1})\\
=\frac{2({2x+x^3}-{4x^2+1})}{\sqrt{2x+x^3}+\sqrt{4x^2-1}}=\frac{2(1-x)(-x^2+3x+1)}{\sqrt{2x+x^3}+\sqrt{4x^2-1}} \geq 0
\end{align*}
since $1>x.$ Hence, the desired inequality is fulfilled.

Let us consider case 2). Then \eqref{INEQ} becomes
\begin{equation}\label{INEQSECONDISOS}
c  m_a+2\sqrt{ac} m_c \geq a m_a+2 c m_c 
\end{equation}
Putting $x=\frac{a}{c}<1,$ after division by $c^2,$ \eqref{INEQSECONDISOS} is equivalent to:
\[
\sqrt{4-x^2}+2\sqrt{x}\sqrt{1+2x^2} \geq x\sqrt{4-x^2}+2\sqrt{1+2x^2}
\]
Hence, 
\[
(1- x)\sqrt{4-x^2}-2\sqrt{1+2x^2}(1-\sqrt{x})\geq 0
\]
Simplifying,
\[
(1-\sqrt{x})\left((1+\sqrt{x})\sqrt{4-x^2}-2\sqrt{1+2x^2}\right)\geq 0
\]
Using the fact that $1+\sqrt{x} \geq 1+x,$ we have:
\begin{align*}
(1-\sqrt{x})\left((1+\sqrt{x})\sqrt{4-x^2}-2\sqrt{1+2x^2}\right)\\
\geq (1-\sqrt{x})\left((1+x)\sqrt{4-x^2}-2\sqrt{1+2x^2}\right)\\
=\frac{(1-\sqrt{x})\left((1+x)^2(4-x^2)-4(1+2x^2)\right)}{(1+x)\sqrt{4-x^2}+2\sqrt{1+2x^2}} \\
=\frac{(1-\sqrt{x})x\left(8-5x-2x^2-x^3\right)}{(1+x)\sqrt{4-x^2}+2\sqrt{1+2x^2}}  \geq 0,
\end{align*}
since $x<1.$
\end{proof}

Further we remind some facts we will use. For any non-degenerate triangle for which $a\leq b \leq c:$ 
\begin{equation}\label{abcmambmc}
m_a \geq m_b \geq m_c.
\end{equation}
\begin{equation}\label{triangle}
a+b >c 
\end{equation}
\begin{equation}\label{trianglemed}
m_c+m_b >m_a 
\end{equation}
\begin{lemma}\label{squareroot}
If $a,b,c$ are sides of a non-degenerate triangle, so are $\sqrt{a},\sqrt{b},\sqrt{c}.$
\end{lemma}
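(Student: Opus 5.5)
We need to show that $\sqrt{a},\sqrt{b},\sqrt{c}$ satisfy the three strict triangle inequalities whenever $a,b,c>0$ do. By symmetry it suffices to treat one of them. We order the sides as in \eqref{abcmambmc}, say $a\leq b\leq c$. Then $\sqrt{a}\leq\sqrt{b}\leq\sqrt{c}$, because the square root is increasing on $[0,\infty)$. Consequently the two inequalities $\sqrt{a}+\sqrt{c}>\sqrt{b}$ and $\sqrt{b}+\sqrt{c}>\sqrt{a}$ are immediate: in each, the left side contains a term at least as large as the right side, plus a strictly positive term. So the only inequality that needs work is
\[
\sqrt{a}+\sqrt{b}>\sqrt{c}.
\]

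For this one I would square both sides; this is legitimate since both sides are positive. The target becomes $a+b+2\sqrt{ab}>c$. By \eqref{triangle} we already have $a+b>c$, and $2\sqrt{ab}>0$. Hence
\[
\left(\sqrt{a}+\sqrt{b}\right)^2=a+b+2\sqrt{ab}>a+b>c=\left(\sqrt{c}\right)^2 .
\]
Taking square roots, again monotonic on positive numbers, gives $\sqrt{a}+\sqrt{b}>\sqrt{c}$.

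There is no real obstacle here. The only point needing care is the reduction to the largest side. Alternatively, one can argue symmetrically without ordering: for each of the three choices of pair, the same squaring argument applied to the corresponding triangle inequality, e.g.\ $b+c>a$, gives the analogous strict inequality for the square roots. Non-degeneracy is preserved because every step is strict.
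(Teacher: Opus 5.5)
Your proof is correct and follows the same route as the paper: you order $a\le b\le c$ and use $a+b>c$ to get $\sqrt{a}+\sqrt{b}>\sqrt{a+b}>\sqrt{c}$. The differences are cosmetic. The paper gets $\sqrt{a}+\sqrt{b}>\sqrt{a+b}$ by dividing by $a+b$ and using $\sqrt{x}>x$ on $(0,1)$, whereas you square and use $2\sqrt{ab}>0$. You also spell out the two easy inequalities that the paper leaves implicit.
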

\begin{proof}Without loss of generality let $a \leq b \leq c.$ We have due to \eqref{triangle}
\[
\sqrt{a+b} > \sqrt{c}.
\]
It remains to prove that 
\begin{align*}
\sqrt{a}+\sqrt{b} > \sqrt{a+b}, \text{ i.e. }
\frac{\sqrt{a}}{\sqrt{a+b}}+\frac{\sqrt{b}}{\sqrt{a+b}} > 1.
\end{align*}
Since we have $\sqrt{x}>x$ for $0<x<1,$
\[
\sqrt{\frac{a}{a+b}}+\sqrt{\frac{b}{a+b}} > \frac{a}{a+b}+\frac{b}{a+b}=1.
\]
\end{proof}

\begin{lemma}\label{scalenetriangle}
If $a,b,c$ are sides of triangle and $a < b < c,$ we have 
\begin{equation}\label{mostimportant}
\sqrt{bc}m_a+\sqrt{ac}m_b \geq b m_b+ c m_c
\end{equation}
\end{lemma}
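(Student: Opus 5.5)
The plan is to remove the term $c\,m_c$ with the third line of \eqref{INEQKEY}, namely $2c\,m_c\le a\,m_b+b\,m_a$, and then compare $m_a$ with $m_b$. After this substitution it suffices to prove
\[
\sqrt{bc}\,m_a+\sqrt{ac}\,m_b\ \ge\ b\,m_b+\tfrac12\left(a\,m_b+b\,m_a\right),
\]
that is,
\begin{equation*}
m_a\left(\sqrt{bc}-\tfrac{b}{2}\right)\ \ge\ m_b\left(b+\tfrac{a}{2}-\sqrt{ac}\right).
\end{equation*}
Since $c>b$, the coefficient $\sqrt{bc}-\frac b2$ is positive. If $b+\frac a2-\sqrt{ac}\le 0$ we are done at once. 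So assume $b+\frac a2>\sqrt{ac}$.

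The crude bound $m_a\ge m_b$ from \eqref{abcmambmc} is not enough here. It would require $\sqrt{bc}+\sqrt{ac}\ge \frac{3b+a}{2}$. As $c\to b$ this tends to $-\tfrac12(\sqrt b-\sqrt a)^2\ge 0$, which fails. Hence the quantitative gap between $m_a$ and $m_b$ must be used, and the target becomes
\[
\frac{m_a}{m_b}\ \ge\ \frac{2b+a-2\sqrt{ac}}{2\sqrt{bc}-b}.
\]

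I would then argue by monotonicity in $c$ on the interval $b\le c<a+b$.

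\textbf{Right-hand side.} Its numerator decreases in $c$ and its denominator increases in $c$. So the right-hand side is largest at $c=b$, where it equals $(2b+a-2\sqrt{ab})/b$.

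\textbf{Left-hand side.} We have $4m_a^2=2b^2+2c^2-a^2$ and $4m_b^2=2a^2+2c^2-b^2$. Also $m_a^2-m_b^2=\frac34(b^2-a^2)$ is independent of $c$, while $m_b$ grows with $c$. So $m_a/m_b$ decreases in $c$, and this direction is the unfavourable one. The clean monotone reduction therefore fails, and I would handle the $c$-dependence directly.

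One route is to square the ratio, normalize $b=1$ with $a=t^2$ and $c=s^2$, and check the resulting two-variable inequality on the region $t<1<s<\sqrt{1+t^2}$. The other route is to keep the two sides' variation coupled. The right-hand side drops at rate about $(\sqrt{a/c}+\text{something})$, while $m_a/m_b$ drops only through $m_b$. One would show
\[
\frac{d}{dc}\log\frac{m_a}{m_b}\ \ge\ \frac{d}{dc}\log(\text{RHS}).
\]
That reduces everything to the boundary case $c=b$.

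In the boundary case $c=b=1$, $a=x<1$, the needed inequality is
\[
\frac{\sqrt{4-x^2}}{\sqrt{1+2x^2}}\ \ge\ 2+x-2\sqrt{x}=1+(1-\sqrt x)^2.
\]
This holds with equality at $x=0$ and at $x=1$ and is strict in between (at $x=\tfrac14$ it reads $1.87\ge 1.25$). I would prove it by squaring and factoring out $(1-\sqrt x)$, in the same manner as case 2) of Lemma~\ref{isosceles}. The estimate $1+\sqrt x\ge 1+x$ there is again a convenient simplification.

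The main obstacle is controlling the dependence on $c$, since the two sides move in the same direction. If the derivative comparison becomes unwieldy, I would fall back on a more careful bound for $c\,m_c$. One option is a convex combination of the third line of \eqref{INEQKEY} with the trivial $m_c\le m_b$. Another is to use $m_b+m_c>m_a$ from \eqref{trianglemed} to retain some of the $m_a$ weight. The aim of either is that the remaining inequality requires only $m_a\ge m_b$ together with an AM--GM step in $\sqrt a,\sqrt b,\sqrt c$. For that step, Lemma~\ref{squareroot} guarantees the triangle relations among $\sqrt a,\sqrt b,\sqrt c$.
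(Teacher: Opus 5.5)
\textbf{There is a genuine gap: the passage from $c=b$ to all $c$ is never carried out, and the mechanism you propose for it is false.}

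Your reduction via the third line of \eqref{INEQKEY} is legitimate. The boundary inequality at $c=b$ is also true; it even follows from case 2) of Lemma~\ref{isosceles}, because $2+x-2\sqrt x\le 2/(1+\sqrt x)$. The problem is extending it to all $c\in(b,a+b)$. We have
\[
\frac{d}{dc}\log\frac{m_a}{m_b}=-\frac{6c\,(b^2-a^2)}{16\,m_a^2m_b^2}.
\]
At $c=b$ this equals $-\frac{6b(b^2-a^2)}{(4b^2-a^2)(b^2+2a^2)}$, which tends to $-\frac{3}{2b}$ as $a\to 0$. The logarithmic derivative of your right-hand side at $c=b$ is $-\frac{\sqrt{a/b}}{2b+a-2\sqrt{ab}}-\frac1b$, which tends to $-\frac1b$.

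So for small $a/b$ the quotient (left side)/(right side) \emph{decreases} as $c$ leaves $b$. Numerically, for $b=1$, $a=0.01$ it drops from about $1.1048$ at $c=1$ to about $1.1025$ at $c=1.005$. Hence $c=b$ is not shown to be the worst case. The two-variable check and the fallback ideas are only stated as intentions, so as written the lemma is not proved.

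\textbf{The missing idea is a second use of \eqref{INEQKEY}.} The term $b\,m_b$ that you leave on the right should also be eliminated. Use the second line together with $m_c\le m_b$: $2b\,m_b\le a\,m_c+c\,m_a\le a\,m_b+c\,m_a$. Adding this to the third line, it suffices to prove $2\sqrt{ac}\,m_b+2\sqrt{bc}\,m_a\ge 2a\,m_b+(b+c)\,m_a$, that is,
\[
2m_b\cdot\sqrt a\,(\sqrt c-\sqrt a)\ \ge\ m_a\,(\sqrt c-\sqrt b)^2 .
\]
This holds factor by factor:
\begin{itemize}
\item $2m_b\ge m_b+m_c>m_a$ by \eqref{trianglemed};
\item $\sqrt a>\sqrt c-\sqrt b>0$ by Lemma~\ref{squareroot};
\item $\sqrt c-\sqrt a>\sqrt c-\sqrt b$ since $a<b$.
\end{itemize}
Only the crude bound $m_a<2m_b$ is needed, so no analysis in $c$ arises.

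The same second-line bound gives $b\,m_b+\tfrac12(a\,m_b+b\,m_a)\le a\,m_b+\tfrac{b+c}{2}\,m_a$. So your reduced inequality is in fact true, and your reduction was sound. Your fallback list already named $m_b+m_c>m_a$ and Lemma~\ref{squareroot}, but without the second line of \eqref{INEQKEY} those ingredients do not close the argument.
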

\begin{proof}
We will show that
\begin{equation}\label{twice}
2\sqrt{ac}m_b+2\sqrt{bc}m_a \geq a m_b +b m_a+ c m_a+a m_b 
\end{equation}
Due to \eqref{INEQKEY} we have
\[
a m_b +b m_a+ c m_a+a m_c \geq 2b m_b+2c m_c
\]
Hence, \eqref{twice} (since $m_b > m_c$) together with \eqref{INEQKEY} imply \eqref{mostimportant}.
We rewrite \eqref{twice} in the following manner:
\[
2(\sqrt{ac}-a)m_b \geq (c+b-2\sqrt{bc})m_a  \]
i.e.,
\[
(2m_b) \sqrt{a}(\sqrt{c}-\sqrt{a}) \geq m_a (\sqrt{c}-\sqrt{b})^2
\]
The last is obviously true since we have $2m_b>m_b+m_c >m_a$ and by Lemma~\ref{squareroot}: $\sqrt{a} > \sqrt{c} -\sqrt{b} $ and $\sqrt{c}-\sqrt{a} > \sqrt{c} - \sqrt{b},$ since $a<b.$
\end{proof}
 
\begin{lemma}\label{order} Let $a<b<c$ be sides of triangle and $m_a>m_b>m_c$ its medians. Then we have
\begin{equation}
B > \max(A,C),
\end{equation}
where $A=a m_a, B=b m_b, C=c m_c.$
\end{lemma}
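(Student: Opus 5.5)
The plan is to compare the squares of the three products directly, using the standard median formula
\[
4m_a^2=2b^2+2c^2-a^2,\qquad 4m_b^2=2a^2+2c^2-b^2,\qquad 4m_c^2=2a^2+2b^2-c^2 .
\]
Since $A,B,C>0$, we have $B>A$ if and only if $4B^2>4A^2$, and likewise for $B>C$. So it suffices to prove the two polynomial inequalities $b^2(2a^2+2c^2-b^2)>a^2(2b^2+2c^2-a^2)$ and $b^2(2a^2+2c^2-b^2)>c^2(2a^2+2b^2-c^2)$.

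For the first one, I expand and cancel the common term $2a^2b^2$. The difference becomes
\[
2c^2(b^2-a^2)-(b^4-a^4)=(b^2-a^2)\left(2c^2-a^2-b^2\right).
\]
It is positive because $b>a$ and $c^2>b^2>a^2$. This gives $B>A$.

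For the second one, I cancel the common term $2b^2c^2$. The difference becomes
\[
2a^2b^2-b^4-2a^2c^2+c^4=(c^4-b^4)-2a^2(c^2-b^2)=(c^2-b^2)\left(c^2+b^2-2a^2\right).
\]
It is positive because $c>b$ and $c^2>a^2$, $b^2>a^2$. This gives $B>C$.

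Combining the two yields $B>\max(A,C)$. Note that the hypothesis $m_a>m_b>m_c$ is not needed beyond what \eqref{abcmambmc} already provides, and the triangle inequality is used only implicitly, through the validity of the median formulas. The only step needing care is the factorization of each difference, and both factor cleanly as shown, so I expect no real obstacle.
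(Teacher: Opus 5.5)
Your proof is correct and is essentially the paper's argument. The paper rationalizes the differences of the square roots where you compare squares, but the numerators it obtains are exactly your factorizations $(b^2-a^2)(2c^2-a^2-b^2)$ and $(c^2-b^2)(c^2+b^2-2a^2)$, each positive since $a<b<c$.
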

\begin{proof}
First we will show that $B>C.$ 

We have to show that:
\[
\sqrt{2a^2b^2+2b^2c^2-b^4} -\sqrt{2b^2c^2+2a^2c^2-c^4} >0
\]
We obtain:
\begin{align*}
\sqrt{2a^2b^2+2b^2c^2-b^4} -\sqrt{2b^2c^2+2a^2c^2-c^4} \\
=\frac{2a^2b^2+2b^2c^2-b^4 -2b^2c^2-2a^2c^2+c^4}{\sqrt{2a^2b^2+2b^2c^2-b^4} +\sqrt{2b^2c^2+2a^2c^2-c^4}}\\
=\frac{(c^2-b^2)(c^2+b^2-2a^2)}{\sqrt{2a^2b^2+2b^2c^2-b^4} +\sqrt{2b^2c^2+2a^2c^2-c^4}}>0
\end{align*}

Next we will show that $B>A.$

We have to show that:
\[
\sqrt{2a^2b^2+2b^2c^2-b^4} -\sqrt{2a^2c^2+2a^2b^2-a^4} >0
\]
We obtain:
\begin{align*}
\sqrt{2a^2b^2+2b^2c^2-b^4} -\sqrt{2a^2c^2+2a^2b^2-a^4} \\
=\frac{2a^2b^2+2b^2c^2-b^4 -2a^2c^2-2a^2b^2+a^4}{\sqrt{2a^2b^2+2b^2c^2-b^4} +\sqrt{2a^2c^2+2a^2b^2-a^4}}\\
=\frac{(b^2-a^2)(2c^2-b^2 -a^2)}{\sqrt{2a^2b^2+2b^2c^2-b^4} +\sqrt{2a^2c^2+2a^2b^2-a^4}}>0
\end{align*}
Hence, $B>A$ and $B>C,$ i.e., $B>\max(A,C).$
\end{proof}

\section{Main result}

We are now ready to prove our main result.
\begin{theorem}\label{scalenetriag} For any non-degenerate triangle \eqref{INEQ} holds.
\end{theorem}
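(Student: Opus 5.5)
Since \eqref{INEQ} is symmetric in the three pairs $(a,m_a),(b,m_b),(c,m_c)$, Lemma~\ref{isosceles} disposes of every triangle with two equal sides. So I may assume the triangle is scalene and, relabelling, that $a<b<c$. Then $m_a>m_b>m_c$ by \eqref{abcmambmc}. I introduce
\[
X=\sqrt{bc}\,m_a,\quad Y=\sqrt{ac}\,m_b,\quad Z=\sqrt{ab}\,m_c,\qquad A=am_a,\quad B=bm_b,\quad C=cm_c .
\]
In this notation \eqref{INEQ} reads $X+Y+Z\ge A+B+C$. Lemma~\ref{scalenetriangle} gives $X+Y\ge B+C$. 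The identity I want to exploit is
\[
XYZ=abc\,m_am_bm_c=ABC, \quad\text{equivalently}\quad \frac{X}{A}\cdot\frac{Y}{B}\cdot\frac{Z}{C}=1 ,
\]
which is exactly the setting of the special case of AM--GM mentioned in the introduction.

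The first step is a case split on $Z$ versus $A$. If $Z\ge A$, adding this to Lemma~\ref{scalenetriangle} gives \eqref{INEQ} at once. So the remaining case is $Z<A$. There the product identity forces $XY=ABC/Z>BC$, so the pair $(X,Y)$ beats $(B,C)$ both in sum (Lemma~\ref{scalenetriangle}) and in product. The task is then to show that the surplus $X+Y-B-C$ is at least the deficit $A-Z$.

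For this I would use the three ratios $u=X/A$, $v=Y/B$, $w=Z/C$, which satisfy $uvw=1$ and hence $u+v+w\ge3$. I would write
\[
X+Y+Z-(A+B+C)=A(u-1)+B(v-1)+C(w-1).
\]
Lemma~\ref{order} says $B$ is the largest of the three weights. The plan is to regroup this sum so that the largest weight $B$ multiplies the quantity $u+v+w-3\ge 0$, and then show that the leftover terms $(A-B)(u-1)+(C-B)(w-1)$ are nonnegative. That amounts to checking the signs of $u-1$ and $w-1$: since $A-B<0$ and $C-B<0$, it suffices that $u\le 1$ and $w\le 1$, or that the positive part is dominated.

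In the present case $w<A/C$, but the sign of $u-1$ is not immediately clear. If this regrouping does not close, my fallback is to return to the inequality \eqref{twice} in the proof of Lemma~\ref{scalenetriangle}. That inequality contains genuine slack, because the step $2m_b>m_a$ and the bound $\sqrt a>\sqrt c-\sqrt b$ are lossy. I would try to extract from it an explicit lower bound for $X+Y-B-C$ that dominates $A-Z=\sqrt a\,(\sqrt a\,m_a-\sqrt b\,m_c)$.

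I expect this last step, controlling the case $Z<A$, to be the main obstacle. Lemma~\ref{scalenetriangle}, Lemma~\ref{order} and the product identity are each easy, but combining them with the correct signs needs care.
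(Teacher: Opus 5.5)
\textbf{There is a genuine gap.} Your reduction to $a<b<c$ and your first case are correct. If $Z\ge A$, adding it to Lemma~\ref{scalenetriangle} gives \eqref{INEQ}; this is exactly the paper's case 2)(ii). The gap is the case $Z<A$, which holds most of the work, and the regrouping you propose there cannot succeed.

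With your pairing $X\leftrightarrow A$, $Y\leftrightarrow B$, $Z\leftrightarrow C$, the medians cancel: $u=\sqrt{bc}/a$, $v=\sqrt{ac}/b$, $w=\sqrt{ab}/c$. So $u>1$ for every triangle with $a<b<c$. The sign of $u-1$ is therefore perfectly clear, and it is the wrong one: $(A-B)(u-1)<0$ always. Nor is this term dominated by $(C-B)(w-1)$ in general. Take $(a,b,c)=(2,3,4)$:
\begin{itemize}
\item $Z=\sqrt{15}<A=\sqrt{46}$, so you are in the remaining case;
\item $(B-A)(u-1)\approx 1.15$;
\item $(B-C)(1-w)\approx 0.79$;
\item so the leftover is about $-0.36$.
\end{itemize}
The fallback of extracting more slack from \eqref{twice} is only a hope, not an argument. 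As written, your proof covers only $Z\ge A$.

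\textbf{The missing idea.} The identity $XYZ=ABC$ holds for every matching of $\{X,Y,Z\}$ with $\{A,B,C\}$. You should choose the matching so that a ratio known to be $\ge1$ sits on the largest weight $B$ and a ratio known to be $\le1$ sits on the smallest weight. Then normalize to the \emph{middle} weight, not to $B$. The key ratio is $X/B=\sqrt{c}\,m_a/(\sqrt{b}\,m_b)$, which is always $\ge1$.
\begin{itemize}
\item If $A\ge C$, write the difference as $B(X/B-1)+A(Y/A-1)+C(Z/C-1)$, where $Z/C=\sqrt{ab}/c\le1$. Lowering $B$ to $A$ and raising $C$ to $A$ gives at least $A\,(X/B+Y/A+Z/C-3)$, which is $\ge0$ by AM--GM since the three ratios multiply to $1$.
\item If $C>A$ and $Z\le A$ (your remaining case), write it as $B(X/B-1)+C(Y/C-1)+A(Z/A-1)$ and move both outer weights to $C$.
\end{itemize}
These are the paper's cases 1) and 2)(i). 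Together with your case $Z\ge A$ they finish the proof. Note that you need the extra split on $A$ versus $C$ to know which weight is the middle one.
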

\begin{proof}
From Lemma~\ref{isosceles} we know the desired inequality is true for isosceles triangles. So without loss of generality we further assume that we have a scalene triangle with $a<b<c.$

We give two equivalent forms of \eqref{INEQ}, which we will use 
\begin{equation}\label{AgeqC}
\left(\frac{\sqrt{c}m_a}{\sqrt{b}m_b}-1\right)B+\left(\frac{\sqrt{c}m_b}{\sqrt{a}m_a}-1\right)A+\left(\frac{\sqrt{ab}}{c}-1\right)C \geq 0
\end{equation}
\begin{equation}\label{CgeqA}
\left(\frac{\sqrt{c}m_a}{\sqrt{b}m_b}-1\right)B+\left(\frac{\sqrt{a}m_b}{\sqrt{c}m_c}-1\right)C+\left(\frac{\sqrt{b}m_c}{\sqrt{a}m_a}-1\right)A \geq 0
\end{equation}

For any fixed triangle we have two possible cases 1) $A \geq C$ or 2)~$C >A.$

Let 1) be fulfilled. We choose \eqref{AgeqC}. By using the fact that $B \geq \max(A,C)$ (see Lemma~\ref{order}) to diminish the value of the expression and then applying the arithmetic mean - geometric mean inequality we obtain:
\begin{align*}
\underbrace{\left(\frac{\sqrt{c}m_a}{\sqrt{b}m_b}-1\right)}_{\geq 0} B+\left(\frac{\sqrt{c}m_b}{\sqrt{a}m_a}-1\right)A+\underbrace{\left(\frac{\sqrt{ab}}{c}-1\right)}_{\leq 0}C\\
\geq \left(\frac{\sqrt{c}m_a}{\sqrt{b}m_b}-1\right)A+\left(\frac{\sqrt{c}m_b}{\sqrt{a}m_a}-1\right)A+\left(\frac{\sqrt{ab}}{c}-1\right)A \\
= \left(\frac{\sqrt{c}m_a}{\sqrt{b}m_b}+\frac{\sqrt{c}m_b}{\sqrt{a}m_a}+ \frac{\sqrt{ab}}{{c} }    -3\right)A \geq 0
\end{align*}
Hence, \eqref{INEQ} is valid.

Let 2) be fulfilled. We choose \eqref{CgeqA}. We have two cases:

2)(i) $\frac{\sqrt{b}m_c}{\sqrt{a}m_a}-1 \leq 0.$ Then proceeding in a similar manner as above we obtain:
\begin{align*}
\underbrace{\left(\frac{\sqrt{c}m_a}{\sqrt{b}m_b}-1\right)}_{\geq 0} B+\left(\frac{\sqrt{a}m_b}{\sqrt{c}m_c}-1\right)C+\underbrace{\left(\frac{\sqrt{b}m_c}{\sqrt{a}m_a}-1\right)}_{\leq 0}A\\
\geq \left(\frac{\sqrt{c}m_a}{\sqrt{b}m_b}-1\right)C+\left(\frac{\sqrt{a}m_b}{\sqrt{c}m_c}-1\right)C+\left(\frac{\sqrt{b}m_c}{\sqrt{a}m_a}-1\right)C\\
= \left(\frac{\sqrt{c}m_a}{\sqrt{b}m_b} + \frac{\sqrt{a}m_b}{\sqrt{c}m_c} +\frac{\sqrt{b}m_c}{\sqrt{a}m_a}-3\right)C \geq 0
\end{align*}
Hence, \eqref{INEQ} is valid.

2)(ii) $\frac{\sqrt{b}m_c}{\sqrt{a}m_a}-1 > 0.$

Then we have 
\[
\left(\frac{\sqrt{b}m_c}{\sqrt{a}m_a}-1\right)A = \sqrt{ab}m_c-am_a >0
\]
But by Lemma~\ref{scalenetriangle} we have that \eqref{mostimportant} is fulfilled.
Hence, \eqref{INEQ} is fulfilled.
\end{proof}

\begin{corollary}\label{twovars} Let $0<x\leq y \leq 1$ and $x+y >1.$ Then the following inequality is always fulfilled:
\begin{align*}
\sqrt{y}\sqrt{2+2y^2-x^2}+\sqrt{x}\sqrt{2+2x^2-y^2}+\sqrt{xy}\sqrt{2x^2+2y^2-1}\\
\geq x\sqrt{2+2y^2-x^2}+y\sqrt{2+2x^2-y^2}+\sqrt{2x^2+2y^2-1} 
\end{align*}
\end{corollary}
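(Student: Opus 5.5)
This corollary should be Theorem~\ref{scalenetriag} rewritten in normalized coordinates. Given $0<x\le y\le 1$ with $x+y>1$, I take the triangle with sides $a=x$, $b=y$, $c=1$. It is non-degenerate: $c=1$ is the largest side because $x\le y\le 1$, and $a+b=x+y>1=c$, so the only nontrivial triangle inequality holds. The other two hold since $y+1>x$ and $x+1>y$ for $0<x\le y\le 1$. Theorem~\ref{scalenetriag} therefore gives \eqref{INEQ} for this triangle.

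Next I write out the medians using $4m_a^2=2b^2+2c^2-a^2$ and its analogues:
\[
2m_a=\sqrt{2y^2+2-x^2},\qquad 2m_b=\sqrt{2x^2+2-y^2},\qquad 2m_c=\sqrt{2x^2+2y^2-1}.
\]
I substitute these, together with $\sqrt{bc}=\sqrt{y}$, $\sqrt{ac}=\sqrt{x}$ and $\sqrt{ab}=\sqrt{xy}$, into \eqref{INEQ} and multiply both sides by $2$.

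The left side of \eqref{INEQ} becomes
\[
\sqrt{y}\sqrt{2+2y^2-x^2}+\sqrt{x}\sqrt{2+2x^2-y^2}+\sqrt{xy}\sqrt{2x^2+2y^2-1},
\]
and the right side becomes
\[
x\sqrt{2+2y^2-x^2}+y\sqrt{2+2x^2-y^2}+\sqrt{2x^2+2y^2-1}.
\]
This is exactly the stated inequality.

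There is no real obstacle. The only points to check are that the radicands are positive, which holds because they are $4$ times the squared medians of a genuine triangle, and that the degenerate cases $x=y$ or $y=1$ (isosceles triangles) are allowed. They are, since Theorem~\ref{scalenetriag} covers every non-degenerate triangle, with the isosceles case supplied by Lemma~\ref{isosceles}.
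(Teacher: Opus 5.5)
Your proposal is correct and matches the paper. The paper states this corollary without a separate proof, as an immediate consequence of Theorem~\ref{scalenetriag}, and your argument supplies that derivation: you specialize to the triangle $a=x$, $b=y$, $c=1$ (non-degenerate by $x+y>1$), substitute the median formulas, and multiply both sides of \eqref{INEQ} by $2$.
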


\section{Additional results and discussion}

One may wonder if indeed \eqref{INEQ} and \eqref{INEQS} are only valid for the medians or they are true for other type of Cevians as well. We will show that replacing the medians with the respective altitudes or internal angle bisectors results in valid inequalities. In case of the altitudes, using the fact that:
\[
ah_a=bh_b=ch_c=2S
\]
we obtain that \eqref{INEQ} is equivalent to the AM-GM inequality, while \eqref{INEQS} is equivalent to:
\[
\frac{ab}{c}+\frac{ac}{b}+\frac{bc}{a}-a-b-c \geq 0
\]
Without loss of generality assume $a\leq b \leq c.$ Hence,
the above is equivalent to:
\[
\left(\frac{b}{a}-1\right)c+\left(\frac{ac}{b^2}-1\right)b+\left(\frac{b}{c}-1\right) a \geq 0
\]
But we have
\[
\underbrace{\left(\frac{b}{a}-1\right)}_{\geq 0}c+\left(\frac{ac}{b^2}-1\right)b+\underbrace{\left(\frac{b}{c}-1\right)}_{\leq 0} a \geq  b \left(\frac{b}{a}+\frac{ac}{b^2}+\frac{b}{c}-3\right) \geq 0
\]

Things are slightly more difficult with the internal angle bisectors. We will sketch the proof, but we will omit some of the tedious direct checks. 

\begin{lemma} For every triangle with sides $a,b,c$ such that $a \leq b \leq c,$ we have:
\[
bl_b \geq \max(al_a,cl_c)
\]
\end{lemma}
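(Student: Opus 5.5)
The plan is to reduce both comparisons to a single function of one variable, using the standard closed form of the bisector,
\[
l_a^2=bc\left(1-\frac{a^2}{(b+c)^2}\right)=\frac{bc\,(b+c-a)(a+b+c)}{(b+c)^2},
\]
and cyclically for $l_b,l_c$. With $p=a+b+c$ this gives $(al_a)^2=abc\,p\cdot f(a)$, $(bl_b)^2=abc\,p\cdot f(b)$ and $(cl_c)^2=abc\,p\cdot f(c)$, where
\[
f(t)=\frac{t(p-2t)}{(p-t)^2}.
\]
The factor $abc\,p$ is common to all three and positive, so the lemma is equivalent to $f(b)\geq f(a)$ and $f(b)\geq f(c)$.

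Next I will compute the difference $f(x)-f(y)$ in factored form. Over the common denominator $(p-x)^2(p-y)^2$ the numerator is $x(p-2x)(p-y)^2-y(p-2y)(p-x)^2$. This expression is antisymmetric in $x,y$, so it is divisible by $x-y$. Expanding and collecting, I expect to get
\[
f(x)-f(y)=\frac{p\,(x-y)\left(p^2-2p(x+y)+3xy\right)}{(p-x)^2(p-y)^2}.
\]
This is the only computational step, and I expect it to be the main obstacle. One could instead study the monotonicity of $f$ (it increases up to $t=p/3$ and then decreases), but unimodality alone does not settle the comparison of $b$ with $c$. That is why I would use the exact difference formula instead.

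Finally I substitute $p=a+b+c$ into the bracket.
\begin{itemize}
\item For $x=b,\ y=a$ the bracket becomes $(a+b+c)(c-a-b)+3ab=c^2-a^2-b^2+ab$. Since $c\geq b$, this is at least $ab-a^2=a(b-a)\geq 0$. Together with $b-a\geq 0$, this gives $f(b)\geq f(a)$, that is, $bl_b\geq al_a$.
\item For $x=b,\ y=c$ the bracket becomes $a^2-b^2-c^2+bc=(a^2-b^2)-c(c-b)$. This is $\leq 0$ because $a\leq b\leq c$. Since $b-c\leq 0$ as well, the product is nonnegative, so $f(b)\geq f(c)$, that is, $bl_b\geq cl_c$.
\end{itemize}
Combining the two cases yields $bl_b\geq\max(al_a,cl_c)$, the exact analogue of Lemma~\ref{order} for bisectors.
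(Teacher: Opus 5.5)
Your proof is correct and is essentially the paper's argument. The paper also reduces $bl_b\ge al_a$ and $bl_b\ge cl_c$ to comparing $\frac{t(p-2t)}{(p-t)^2}$ at $t=a,b,c$, using a conjugate-sum step instead of squaring, and it reaches the same factorizations $(b-a)p\,(c^2-a^2-b^2+ab)$ and $(c-b)p\,(b^2-a^2+c^2-bc)$. Your difference formula $f(x)-f(y)=\frac{p(x-y)(p^2-2p(x+y)+3xy)}{(p-x)^2(p-y)^2}$ is correct and simply combines the paper's two separate computations into one.
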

\begin{proof}
$ bl_b \geq al_a$ is equivalent to
\begin{equation}
\frac{\sqrt{b (a+c-b)} }{a+c}-         \frac{\sqrt{a (b+c-a)} }{b+c}  \geq 0                         
\end{equation}
This is true if
\begin{align*}
&\left(\frac{\sqrt{b (a+c-b)} }{a+c}-         \frac{\sqrt{a (b+c-a)} }{b+c}  \right)\left(\frac{\sqrt{b (a+c-b)} }{a+c}+        \frac{\sqrt{a (b+c-a)} }{b+c}  \right) \\
&=\frac{{b (a+c-b)} }{(a+c)^2}-    \frac{{a (b+c-a)} }{(b+c)^2}  =\frac{(b - a) (a + b + c) ( a b -a^2+c^2- b^2 )}{(a+c)^2(b+c)^2} \geq 0                      
\end{align*}
which is true since $a \leq b \leq c.$

$ bl_b \geq cl_c$ is equivalent to

\begin{equation}
\frac{\sqrt{b (a+c-b)} }{a+c}-         \frac{\sqrt{c (a+b-c)} }{a+b}  \geq 0                         
\end{equation}
This is true if
\begin{align*}
&\left(\frac{\sqrt{b (a+c-b)} }{a+c}-         \frac{\sqrt{c (a+b-c)} }{a+b}  \right)\left(\frac{\sqrt{b (a+c-b)} }{a+c}+        \frac{\sqrt{c (a+b-c)} }{a+b}   \right) \\
&=\frac{{b (a+c-b)} }{(a+c)^2}-    \frac{{c (a+b-c)} }{(a+b)^2}  =\frac{(c - b) (a + b + c) ( b^2 -a^2 +c^2- b c )}{(a+c)^2(a+b)^2} \geq 0                      
\end{align*}
which is true since $a \leq b \leq c.$

Hence, the condition of the lemma is fulfilled.
\end{proof}

Denoting everywhere below $al_a$ by $A,$ $bl_b$ by $B,$ and $cl_c$ by $C,$  and proceeding as in the case of the medians for the analogue of \eqref{INEQ} we obtain:

Case 1) $A \geq C$

Then 
\begin{align*}
\underbrace{\left(\sqrt{\frac{c}{b}}\frac{l_a}{l_b}-1\right)}_{\geq 0} B+\left(\sqrt{\frac{c}{a}}\frac{l_b}{l_a}-1\right)A+\underbrace{\left(\frac{\sqrt{ab}}{c}-1\right)}_{\leq 0}C\\
\geq \left(\sqrt{\frac{c}{b}}\frac{l_a}{l_b}+\sqrt{\frac{c}{a}}\frac{l_b}{l_a}+\frac{\sqrt{ab}}{c}-3\right)A \geq 0
\end{align*}

Case 2) $C > A$

Then, if $\sqrt{b}l_c \leq \sqrt{a}{l_a}$ we have
\begin{align*}
&\underbrace{\left(\sqrt{\frac{c}{b}}\frac{l_a}{l_b}-1\right)}_{\geq 0} B+\left(\sqrt{\frac{a}{c}}\frac{l_b}{l_c}-1\right)C+\underbrace{\left(\sqrt{\frac{b}{a}}\frac{l_c}{l_a}-1\right)}_{\leq 0} A\\
&\geq \left(\sqrt{\frac{c}{b}}\frac{l_a}{l_b}+\sqrt{\frac{a}{c}}\frac{l_b}{l_c}+\sqrt{\frac{b}{a}}\frac{l_c}{l_a}-3\right)C \geq 0
\end{align*}
However, it is easy to show that $\sqrt{b}l_c \leq \sqrt{c}l_c  \leq \sqrt{a}{l_a}.$

$\sqrt{c}l_c  \leq \sqrt{a}{l_a}$ is equivalent to
\[
0 \leq \frac{\sqrt{b+c-a}}{b+c}   -    \frac{\sqrt{a+b-c}}{a+b} , 
\]
But
\begin{align*}
&\frac{b+c-a}{(b+c)^2} -\frac{a+b-c}{(a+b)^2}=-\frac{1}{a+b}+   \frac{1}{b+c}     + \frac{c}{(a+b)^2} - \frac{a}{(b+c)^2} \\
& \geq  (c-a)\left(\frac{1}{(a+b)(b+c)}+\frac{1}{(a+b)^2}\right) \geq 0
\end{align*}
Hence,
\[
\frac{\sqrt{b+c-a}}{b+c}   -    \frac{\sqrt{a+b-c}}{a+b}=\dfrac{\dfrac{b+c-a}{(b+c)^2} -\dfrac{a+b-c}{(a+b)^2}}{\dfrac{\sqrt{b+c-a}}{b+c}   + \dfrac{\sqrt{a+b-c}}{a+b} } \geq 0
\]

For the analogue of \eqref{INEQS} we have

Case 1) $A \geq C$

Then 
\begin{align*}
& \underbrace{\left(c\frac{l_a}{l_b}-b\right)}_{\geq 0}B+\left(c\frac{l_b}{l_a}-a\right)A+\underbrace{\left(\frac{ab}{c}-c\right)}_{\leq 0}C\\
&\geq \left(c\frac{l_a}{l_b}-b\right)A+\left(c\frac{l_b}{l_a}-a\right)A+\left(\frac{ab}{c}-c\right)A\\
&=\left(\underbrace{\left(\frac{l_a}{l_b}-1\right)}_{\geq 0}c+\left(\frac{c}{b}\frac{l_b}{l_a}-1\right)b+\underbrace{\left(\frac{b}{c}-1\right)}_{\leq 0}a\right)A \\
&\geq \left(\frac{l_a}{l_b}+\frac{c}{b}\frac{l_b}{l_a}+\frac{b}{c}-3\right)bA \geq 0
\end{align*}

Case 2) $C > A$

Let $bl_c \leq a l_a \leq cl_c.$ Then,
\begin{align*}
& \underbrace{\left(c\frac{l_a}{l_b}-b\right)}_{> 0} B+\left(a\frac{l_b}{l_c}-c\right)C+\underbrace{\left(b\frac{l_c}{l_a}-a\right)}_{\leq 0} A  \geq \left( c\frac{l_a}{l_b}+a\frac{l_b}{l_c}+b\frac{l_c}{l_a}-a-b-c\right)C\\
&= \left(  \underbrace{\left( \frac{l_a}{l_b} -1\right)}_{\geq 0}c  + \left(\frac{a}{b}\frac{l_b}{l_c}            -1 \right)b+\underbrace{\left(\frac{b}{a}\frac{l_c}{l_a}-1 \right)}_{\leq 0}a  \right)C \geq \left( \frac{l_a}{l_b} +\frac{a}{b}\frac{l_b}{l_c}  +      \frac{b}{a}\frac{l_c}{l_a}-3 \right)  bC
\end{align*}
which is true. 

Let $bl_c > a l_a ,$ which is equivalent to $ab l_c \geq a^2 l_a.$ 
We will show that 
\[
bcl_a+acl_b \geq c^2l_c+b^2l_b
\]
After dividing both sides by $bcl_b,$ we obtain the equivalent inequality:
\[
\frac{l_a}{l_b}+\frac{a}{b} \geq \frac{C}{B}+\frac{b}{c}
\]
We will show that
\[
\frac{l_a}{l_b} \geq \frac{c+b-a}{c}
\]
which yields the desired inequality.

Indeed,
\begin{align*}
\frac{l_a}{l_b}= \frac{\sqrt{ b+c-a} }{\sqrt{ a+c-b} } \sqrt{\frac{b}{a}}\frac{a+c}{b+c} \geq \frac{\sqrt{ b+c-a} }{\sqrt{ a+c-b} }  = \frac{ b+c-a}{\sqrt{c^2-(b-a)^2}} \geq \frac{ b+c-a}{c} .
\end{align*}
Hence, the inequality also holds in this case.

\begin{corollary} We have:
\begin{align}
(\sqrt{bc}-a) f_a+(\sqrt{ac}-b) f_b+(\sqrt{ab}-c)  f_c \geq 0\\
(bc-a^2) f_a+(ac-b^2) f_b+(ab-c^2)  f_c \geq 0,
\end{align}
where:
\[
f_a=\alpha m_a+ \beta h_a + \gamma l_a; f_b=\alpha m_b+ \beta h_b + \gamma l_b ; f_c=\alpha m_c+ \beta h_c + \gamma l_c,
\]
for any $\alpha, \beta, \gamma \in [0,+\infty)$
\end{corollary}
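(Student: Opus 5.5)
The corollary follows from linearity, with no new analysis needed. Both left-hand sides are linear in the triple $(f_a,f_b,f_c)$, and the coefficients $\sqrt{bc}-a$, $\sqrt{ac}-b$, $\sqrt{ab}-c$ (respectively $bc-a^2$, $ac-b^2$, $ab-c^2$) depend only on the sides $a,b,c$. So I will substitute $f_a=\alpha m_a+\beta h_a+\gamma l_a$, and similarly for $f_b,f_c$, and regroup by the Cevian type. This gives
\[
\sum_{\rm cyc}(\sqrt{bc}-a)f_a=\alpha\sum_{\rm cyc}(\sqrt{bc}-a)m_a+\beta\sum_{\rm cyc}(\sqrt{bc}-a)h_a+\gamma\sum_{\rm cyc}(\sqrt{bc}-a)l_a ,
\]
and the analogous decomposition for the second inequality.

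Next I show each of the three sums is nonnegative.

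\textbf{First inequality.}
\begin{itemize}
\item The median sum is nonnegative by Theorem~\ref{scalenetriag}, i.e.\ \eqref{INEQ}.
\item The altitude sum reduces, via $ah_a=bh_b=ch_c=2S$, to $2S\bigl(\sqrt{bc}/a+\sqrt{ac}/b+\sqrt{ab}/c-3\bigr)$. This is nonnegative by AM-GM, since the product of the three fractions is $1$.
\item The bisector sum is nonnegative by the case analysis carried out above for $l_a,l_b,l_c$. This covers Case 1) $A\ge C$, and Case 2) $C>A$ using $\sqrt b\,l_c\le\sqrt a\,l_a$.
\end{itemize}

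\textbf{Second inequality.}
\begin{itemize}
\item The median sum is Kee-Wai Lau's inequality \eqref{INEQS}.
\item The altitude sum becomes $2S\bigl(\tfrac{bc}{a}+\tfrac{ac}{b}+\tfrac{ab}{c}-a-b-c\bigr)\ge 0$, as shown above.
\item The bisector sum was handled in the two cases $A\ge C$ and $C>A$, the latter split according to whether $bl_c\le al_a$.
\end{itemize}

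Since $\alpha,\beta,\gamma\ge 0$, a nonnegative combination of nonnegative quantities is nonnegative, and both inequalities follow.

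The only point needing care is that each cited result holds for every non-degenerate triangle. Several of the earlier arguments assumed an ordering $a\le b\le c$ or $a<b<c$, and this ordering must be transferable to an arbitrary labelling. That holds because each left-hand side is symmetric under simultaneous permutation of $(a,b,c)$ and the corresponding Cevians. The isosceles cases for medians are covered by Lemma~\ref{isosceles}. For altitudes and bisectors, the arguments given used only non-strict orderings, so they already include the isosceles cases. I expect this verification of full generality to be the main, though minor, obstacle; the rest is the linearity step.
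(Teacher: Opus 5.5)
Your proposal is correct and matches the paper's intended argument. Both left-hand sides are linear in $(f_a,f_b,f_c)$ with coefficients depending only on $a,b,c$, so for $\alpha,\beta,\gamma\ge 0$ they are nonnegative combinations of the median, altitude and bisector versions of \eqref{INEQ} and \eqref{INEQS}, each established or cited earlier in the paper, and you additionally make explicit the relabelling symmetry that justifies assuming $a\le b\le c$, which the paper leaves implicit.
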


It is not hard to construct a counterexample for the analogues of \eqref{INEQ} and \eqref{INEQS} pertaining to arbitrary Cevians. In particular, our counterexample did not satisfy the conditions that altitudes, medians and internal angle bisectors do satisfy, namely the analogues of \eqref{abcmambmc} and the fact that:
\[
B \geq \max(A, C).
\]

This observation naturally leads to the following question.
\begin{openproblem} Let $a \leq b \leq c$ be the sides of triangle. Then do there exist any Cevians  that satisfy both the inequalities:
\begin{equation}\label{eqcacabcc}
C_a \geq C_b \geq C_c
\end{equation}
and
\begin{equation}\label{BmaxAC}
b\, C_b \geq \max(a\, C_a, c\, C_c)
\end{equation}
but fail to satisfy either one or both of the following inequalities (analogues of \eqref{INEQ} and \eqref{INEQS}, respectively)?
\begin{equation*}
(\sqrt{bc}-a) C_a+(\sqrt{ac}-b) C_b+(\sqrt{ab}-c)  C_c \geq 0\tag{1$^*$} \label{EQ1prim}
\end{equation*}
\begin{equation*}
(bc-a^2) C_a+(ac-b^2) C_b+(ab-c^2)  C_c \geq 0.
\tag{2$^*$} \label{EQ2prim}
\end{equation*}
\end{openproblem}

\begin{remark} It seems likely that inequalities  \eqref{eqcacabcc} and \eqref{BmaxAC} are necessary but not sufficient for the fulfillment of \eqref{EQ1prim} and \eqref{EQ2prim}.

If we consider the splitters (the Cevians that pass through the Nagel point and bisect the perimeter of the triangle), for a triangle with sides $a\leq b \leq c,$ we have for their lengths:
\begin{align*}
n_a&=\sqrt{s}\sqrt{\frac{(c-b)^2}{a}+s-a};\\
n_b&=\sqrt{s}\sqrt{\frac{(c-a)^2}{b}+s-b};\\
n_c&=\sqrt{s}\sqrt{\frac{(b-a)^2}{c}+s-c},
\end{align*}
where $s=\frac{a+b+c}{2}$ is the semiperimeter.
It is not hard to show that $n_a-n_b \geq 0,$ and $n_b-n_c \geq 0.$ For the first consider the expression
\[
\frac{ab(n_a^2-n_b^2)}{s}=(b(c-b)^2+ab(b-a)-a(c-a)^2)=(b-a)(a+b-c)^2 \geq 0.
\]
Hence, $n_a-n_b \geq 0$ since $a,b,n_a+n_n$ and $s$ are all positive. Analogously, $n_b-n_c \geq 0$ is equivalent to
\[
\frac{bc(n_b^2-n_c^2)}{s}=(c(c-a)^2+bc(c-b)-b(b-a)^2)=(c-b)(c+b-a)^2 \geq 0.
\]
Therefore, \eqref{eqcacabcc} is fulfilled.

For \eqref{BmaxAC}, let us first establish that $B=b\, n_b \geq a\, n_a = A.$

We have to show that 
\[
b(c-a)^2+b^2(s-b) \geq a(c-b)^2+a^2(s-a)
\]
This inequality, however is equivalent to 
\[
(2c-a-b)(b-a)s \geq 0,
\]
which in view of $a\leq b \leq c$ is true. Thus, $B \geq A.$
For $B \geq  C= c\, n_c,$ we have to show
\[
b(c-a)^2+b^2(s-b) \geq c(b-a)^2+c^2(s-s),
\]
This in turn is equivalent to
\[
(c+b-2a)(c-b)s \geq 0.
\]
The last, in view of $a\leq b \leq c$ is obviously true.

Hence, \eqref{BmaxAC} is also fulfilled.

Let us consider the obtuse triangle with sides $a=4, b=16, c=18.$ For it we have,
$n_a=4\sqrt{19},n_b=\frac{1}{2}\sqrt{61}\sqrt{19}, n_c=3\sqrt{19}.$ A quick calculation shows that $C>A$ and both \eqref{EQ1prim} and  \eqref{EQ2prim} fail to hold in this case.
\end{remark}

\section{Conclusion}
We have provided a proof for an inequality which was conjectured to be true but had not been proved before. In our small quest for establishing the desired proof, we have stumbled upon similarly looking inequality and we have established that both inequalities may be extended to at least some other Cevians. We have thus given a new direction for investigation of the innumerable properties of triangles. Finally, we wish to point the interested reader to some wonderful books pertaining to inequalities, which have been an invaluable help and source of inspiration in our endeavour -- \cite{less_is_more,secrets,major}.


\begin{thebibliography}{99} %
\bibitem{Crux} Crux Mathematicorum, Vol.20(01), 1994, problem 1904, 16

\bibitem{Cruxsol} Crux Mathematicorum, Vol.20(10), 1994, problem 1904, 289-290
\bibitem{MathematicalReflections} Mathematical Reflections, Vol. 6, 2016, problem J394

\bibitem{cuttheknot} \href{https://www.cut-the-knot.org/triangle/InequalityWithSidesAndMedians.shtml}{Bogomolny, A. (n.d.). An Inequality with Sides and Medians. Interactive Mathematics Miscellany and Puzzles. (last accessed on 10 Feb 2026)} 

\bibitem{less_is_more} Alsina, C.,  R. B. Nelsen. When less is more: visualizing basic inequalities, Mathematical Association of America, Washington, 2009.

\bibitem{secrets} Hung, P. K. Secrets in inequalities. GIL Publishing House, Vol.~1, 2008.


\bibitem{major} Marshall, A. W., I. Olkin, B. Arnold. Inequalities: Theory of Majorization and Its Applications, Second edition. Springer Series in Statistics. Springer, New York, 2011. 

\end{thebibliography}
\end{document}